\documentclass{article}
\usepackage{graphicx,amsmath,amsfonts,amssymb,amsthm}
\usepackage[dvipsnames,svgnames]{xcolor} 
\usepackage[margin=1in]{geometry}% can also add 'nohead', or leave blank

\newtheorem{corollary}{Corollary}
\newtheorem{proposition}{Proposition}

\usepackage{listings}
\usepackage{fancyhdr}
\newcommand{\helv}{%
    \fontfamily{phv}\fontsize{9}{11}\selectfont}
\usepackage{hyperref} 
\hypersetup{
  urlcolor=black,
  colorlinks=true,
  citecolor=MidnightBlue,
  urlcolor=Bittersweet,
  pdftitle={Singular value soft-thresholding via the polar decomposition},
  pdfauthor={Stephen Becker}
}
\newcommand{\defeq}{\stackrel{\text{\tiny def}}{=}}
\DeclareMathOperator{\polar}{polar}

\title{Singular value soft-thresholding via the polar decomposition}

\author{Stephen Becker\thanks{\texttt{stephen.becker@colorado.edu}, University of Colorado Boulder, USA}}

\date{\today}

\begin{document}

\maketitle
\begin{abstract}
Singular value soft-thresholding can be computed via a reduction to the matrix polar decomposition, which allows one to exploit GPU-friendly algorithms for computing the polar decomposition.  Empirically, there is a significant speed-up on  GPUs compared to the standard approach using the SVD. We leave the investigation of robustness to future work, but note that due to the discontinuous nature of the sign function, the reduction to the polar decomposition is likely only suitable for low-accuracy applications.
\end{abstract}

\thispagestyle{fancy} 

\section{Motivation}
The polar decomposition can be computed via the SVD, but recently it has been popular to compute it to low-accuracy via communication-friendly algorithms such as the Newton-Schulz iteration (see, e.g., \cite{higham2008functions}); this is done as a step in some transformer-based neural net training methods such as Muon. The dominant cost of Newton-Schulz based methods is almost exclusively in matrix-matrix multiplications, which is communication-friendly and fast on a GPU. Quoting \cite{chen2014stable}, \begin{quote}
``The best communication cost of matrix-matrix multiplications is $\log p$ times smaller than that of LU and QR, where $p$ is the number of processors''    
\end{quote}
and presumably also has better communication cost compared to eigenvalue and singular value decompositions.

We show that polar decompositions can be used to compute singular value soft-thresholding, and therefore provide a low-communication GPU-friendly algorithm. There is already some work \cite{SVD_NewtonSchulz} showing that polar decompositions can be used to compute the SVD, and hence could do singular value soft-thresholding, but this still relies on an eigendecomposition and QR and is thus not as efficient as we would like. Our idea is to avoid computing the SVD altogether.

\section{The algorithm}
If $A=U\Sigma V^\top$ is the SVD of a $m\times n$ matrix $A$, then the polar decomposition is $P = UV^\top$.  If $A=A^\top$ then the polar is equivalent to the matrix sign function (i.e., taking the sign of the eigenvalues). We'll use both the symmetric and rectangular polar.
Let $\Sigma = \begin{bmatrix} \Sigma_1 & 0 \\ 0 & \Sigma_2\end{bmatrix}$ where $\Sigma_1 \succeq \tau I \succ \Sigma_2$, and let $U=[U_1,U_2]$ and $V=[V_1,V_2]$ according to the same partition.

Our goal is to compute singular value soft-thresholding,
\begin{equation}
    A_\tau \defeq U \lfloor \Sigma - \tau I\rfloor_{+} V^\top = U_1 (\Sigma_1 - \tau I) V_1^\top
\end{equation}
since it is a common subroutine used in proximal gradient methods involving the matrix nuclear norm; see, e.g., \cite{cai2010singular}.  Our main result is the following proposition:

\begin{proposition}
Let $P=\polar(A)$ and $S=\polar(A^\top A - \tau I_{n})$, then $A_\tau = \frac12(A+AS) - \frac{\tau}{2}(P+PS)$.\footnote{An equivalent statement holds with $S=\polar(A A^\top - \tau I_{m})$ and $A_\tau = \frac12(A+SA) - \frac{\tau}{2}(P+SP)$, which would be faster if $n>m$.}
\end{proposition}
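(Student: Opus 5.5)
The plan is to diagonalize everything in a single SVD basis and check the identity one singular value at a time. Write a full SVD $A=U\Sigma V^\top$ with $V$ an $n\times n$ orthogonal matrix, and let $\sigma_1,\dots,\sigma_n$ be the diagonal entries of $\Sigma$, padded with zeros if $m<n$. Then
\[
A^\top A-\tau I_n = V(\Sigma^\top\Sigma-\tau I_n)V^\top .
\]
This matrix is symmetric, so by the remark above its polar is its matrix sign function, and
\[
S = V D V^\top, \qquad D=\operatorname{diag}(d_i).
\]
Here $d_i$ is the sign of the $i$-th eigenvalue of $A^\top A-\tau I_n$. Similarly $P=UJV^\top$, where $J$ has the shape of $\Sigma$ with $J_{ii}=1$ when $\sigma_i>0$ (zero singular directions are handled in the last step).

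Substituting these forms, the right-hand side becomes
\[
\tfrac12 U\,\Sigma(I+D)\,V^\top-\tfrac{\tau}{2}U\,J(I+D)\,V^\top = U\,\tfrac12(\Sigma-\tau J)(I+D)\,V^\top .
\]
The claim therefore reduces to a scalar identity for each $i$:
\[
\tfrac12(\sigma_i-\tau)(1+d_i)=\lfloor\sigma_i-\tau\rfloor_+ .
\]
When $d_i=+1$ and $\sigma_i>\tau$, both sides equal $\sigma_i-\tau$. When $d_i=-1$ and $\sigma_i<\tau$, both sides vanish. At the boundary $\sigma_i=\tau$ the factor $\sigma_i-\tau$ is zero, so any convention for the sign of $0$ (any value of $\polar$ on the null space of $A^\top A-\tau I$) gives the right answer.

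\textbf{The main obstacle: the threshold has to match.} The eigenvalue of $A^\top A-\tau I_n$ is $\sigma_i^2-\tau$, so $d_i=\operatorname{sign}(\sigma_i^2-\tau)$. This switches at $\sigma_i=\sqrt\tau$, not at $\sigma_i=\tau$. The scalar identity needs $d_i=\operatorname{sign}(\sigma_i-\tau)$. Since $\sigma_i\ge 0$, that sign equals $\operatorname{sign}(\sigma_i^2-\tau^2)$. So I would first verify whether the intended matrix is $S=\polar(A^\top A-\tau^2 I_n)$. With that reading the argument above goes through verbatim. With $\tau I_n$ literally, the formula computes soft-thresholding correctly only when $\tau\in\{0,1\}$. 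This is the step I expect to require care, and I would state the proposition with $\tau^2$.

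Finally, I would handle the degenerate directions. If $\sigma_i=0$, then $P$ is not uniquely defined there. But for $\tau>0$ we have $d_i=-1$, so $1+d_i=0$ annihilates that column regardless of $J_{ii}$. If $\tau=0$, then $\sigma_i-\tau J_{ii}=0$ whenever $J_{ii}$ is taken as $0$ or $1$. The extra columns of $V$ when $n>m$ correspond to $\sigma_i=0$ and are covered by the same argument. The footnote's variant follows by applying the result to $A^\top$ and transposing, using $\polar(A^\top)=\polar(A)^\top$ and the symmetry of $S$.
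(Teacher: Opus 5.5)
Your proof is correct and is essentially the paper's argument. The paper likewise writes $P=U_1V_1^\top+U_2V_2^\top$ and $S=V_1V_1^\top-V_2V_2^\top$ in the SVD basis and cancels terms, which is your diagonal reduction to $\tfrac12(\sigma_i-\tau)(1+d_i)=\lfloor\sigma_i-\tau\rfloor_+$; your treatment of zero singular values, $n>m$, and the boundary $\sigma_i=\tau$ fills in details the paper omits. Your $\tau^2$ correction is also right: the paper's claim $S=V_1V_1^\top-V_2V_2^\top$, with the split at $\sigma_i\ge\tau$, holds only for $S=\polar(A^\top A-\tau^2 I_n)$, which is what the paper's own code computes, so the printed statement has a typo and your proof has no gap.
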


The intuition in the proof is that the scalar thresholding function $\lfloor x \rfloor_+ \defeq \max(x,0)$ can be written  as $\lfloor x \rfloor_+ = \frac12\left( |x| + x \right)$.
\begin{proof}
This follows from the facts that $P=UV^\top = U_1V_1^\top + U_2V_2^\top$ and also $S=V_1V_1^\top - V_2V_2^\top$. Thus $AS=U_1\Sigma_1V_1^\top - U_2\Sigma_2V_2^\top$ and $PS=U_1V_1^\top - U_2V_2^\top$, and after canceling some terms we get the result.
\end{proof}

Simplifying to the $m=n$ case, it is clear that the algorithm is still $\mathcal{O}(n^3)$ flops, no better than SVD-based methods, but it has much lower communication complexity.

\begin{corollary}
If $\sigma_{r}$ is known (and assuming $\sigma_{r+1}>\sigma_r$), then the best rank-$r$ approximation $[A]_r$ can be computed using $S=\polar(A^\top A - \tau I_{n})$ for $\tau=\sigma_r + \epsilon$ and $[A]_r = \frac12(A+AS)$, where $0<\epsilon <\sigma_{r+1}-\sigma_r$.
\end{corollary}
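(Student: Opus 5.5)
The plan is to reuse the algebra from the proof of the Proposition directly, and discard the $\tau$-dependent term involving $P$. Write the SVD $A=U\Sigma V^\top$. Then $A^\top A-\tau I_n = V(\Sigma^\top\Sigma-\tau I)V^\top$ is symmetric. Its polar therefore equals its matrix sign function:
\begin{equation*}
S = V\,\mathrm{sign}(\Sigma^\top\Sigma-\tau I)\,V^\top = V_1V_1^\top - V_2V_2^\top .
\end{equation*}
Here $V_1$ collects the right singular vectors whose eigenvalue of $A^\top A$ exceeds $\tau$, and $V_2$ collects the rest. This representation, and the fact that $S$ is a well-defined orthogonal matrix, requires that no eigenvalue of $A^\top A$ equal $\tau$. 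The strict inequality $0<\epsilon<\sigma_{r+1}-\sigma_r$ is exactly what I will use to guarantee this, since it places $\tau$ strictly inside the gap.

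Next, exactly as in the proof of the Proposition, $AS = U_1\Sigma_1V_1^\top - U_2\Sigma_2V_2^\top$. Adding $A = U_1\Sigma_1V_1^\top + U_2\Sigma_2V_2^\top$ and halving gives
\begin{equation*}
\tfrac12(A+AS) = U_1\Sigma_1V_1^\top ,
\end{equation*}
which is a truncated SVD of $A$. By the Eckart--Young theorem, this truncated SVD is $[A]_r$ as soon as the block $\Sigma_1$ consists of exactly the $r$ dominant singular values. The hypothesis $\sigma_{r+1}>\sigma_r$ is precisely the gap condition that makes $[A]_r$ unique, so no ambiguity arises in identifying it.

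The main obstacle is the bookkeeping in this last identification. I must check that the choice $\tau=\sigma_r+\epsilon$ splits the spectrum of $A^\top A$ into exactly $r$ eigenvalues on the ``$+$'' side and the remainder on the ``$-$'' side. Two points need care.
\begin{itemize}
\item The ordering convention implicit in the hypothesis $\sigma_{r+1}>\sigma_r$ must be matched with which side of $\tau$ counts as ``kept'' in the Proposition.
\item The sign function acts on the eigenvalues $\sigma_i^2-\tau$ of $A^\top A-\tau I$, not on $\sigma_i-\tau$.
\end{itemize}
So I would first verify that the threshold condition really places $\tau$ strictly between the relevant squared singular values. If it does not, the statement should be read with $\tau$ lying in the corresponding gap of the $\sigma_i^2$, for instance $\tau=\sigma_r^2+\epsilon$ with $\epsilon$ smaller than that gap. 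With the partition fixed this way, the count of kept singular values is exactly $r$ and the argument above finishes the proof.
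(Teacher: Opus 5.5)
Your core computation is the one the paper relies on. The paper gives no separate proof of the corollary; it is read off from the Proposition's argument: with $S=V_1V_1^\top-V_2V_2^\top$ one gets $\tfrac12(A+AS)=AV_1V_1^\top=U_1\Sigma_1V_1^\top$, and Eckart--Young finishes once $\Sigma_1$ is exactly the block of the $r$ largest singular values.

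\textbf{The gap.} It is in the step you yourself call the main obstacle: you postpone the count, and the amended reading you propose does not give $r$. The eigenvalues of $A^\top A-\tau I_n$ are $\sigma_i^2-\tau$ (together with $-\tau$ if $n>m$), so the kept block is $\{i:\sigma_i^2>\tau\}$.
\begin{itemize}
\item Your claim that $0<\epsilon<\sigma_{r+1}-\sigma_r$ keeps every eigenvalue away from zero is false. Take $\sigma_r=2$, $\sigma_{r+1}=5$, $\epsilon=2$: this satisfies the hypotheses, but $\tau=4=\sigma_r^2$, so $A^\top A-\tau I_n$ is singular.
\item The hypothesis $\sigma_{r+1}>\sigma_r$ is impossible in the usual decreasing order, where uniqueness of $[A]_r$ needs $\sigma_r>\sigma_{r+1}$. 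So it forces increasing order. Then your choice $\tau=\sigma_r^2+\epsilon$ keeps $\sigma_{r+1},\dots,\sigma_k$, where $k=\min(m,n)$, and $\tfrac12(A+AS)=[A]_{k-r}$.
\item In decreasing order, $\tau>\sigma_r^2$ keeps at most $r-1$ values.
\end{itemize}
So your closing sentence, that the count of kept singular values is exactly $r$, fails under either convention.

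\textbf{The repair.} The statement is misindexed, and like the Proposition it omits the square that the code has (\texttt{tau**2}). A proof therefore has to repair the statement rather than defer the check. With $\sigma_1\ge\sigma_2\ge\cdots$, assume $\sigma_r>\sigma_{r+1}$ and choose $\sigma_{r+1}^2<\tau<\sigma_r^2$. For example, take $\tau=(\sigma_{r+1}+\epsilon)^2$ with $0<\epsilon<\sigma_r-\sigma_{r+1}$; equivalently, $S=\polar(A^\top A-t^2I_n)$ with $t=\sigma_{r+1}+\epsilon$, matching the code. Then:
\begin{itemize}
\item $\tau>0$ puts any null-space directions of $A$ into $V_2$;
\item no eigenvalue of $A^\top A-\tau I_n$ vanishes;
\item $\Sigma_1=\mathrm{diag}(\sigma_1,\dots,\sigma_r)$.
\end{itemize}
Your argument then goes through verbatim.

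Strict placement matters more here than in the Proposition. There, a tie $\sigma_i=t$ is harmless, because its coefficient $\tfrac{1+s_i}{2}(\sigma_i-t)$ vanishes whatever $s_i$ is. Once the $P$-term is dropped, the coefficient becomes $\tfrac{1+s_i}{2}\sigma_i$, which depends on the undefined sign.
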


\section{Numerical Experiments}
In the CPU, the code is much slower than running the standard SVD. However, on the GPU there is often an advantage.  We use Polar Express~\cite{amsel2025polar}, which converts single precision to \texttt{bfloat16} for even greater speed gains, at the expense of even less accuracy. This is an improved variant of Newton-Schulz and is fundamentally an iterative method, where the accuracy improves with more iterations. Thus our first test is to examine the accuracy as a function of number of iterations used inside the polar decomposition algorithm. The results in Fig.~\ref{fig:accuracy} show that, for one particular matrix, error decays quickly from about 4 to 6 iterations, and then stagnates. The error is always unacceptably large for some values of $\tau$ (e.g., close to $\sigma_{\max}$) while reasonable (1\%) for values of $\tau$ near $\sigma_{\min}$.  On a GPU, running extra iterations has very little cost, so we suggest overestimating, such as choosing $20$ iterations.

\begin{figure}
    \centering
    \includegraphics[width=0.495\linewidth]{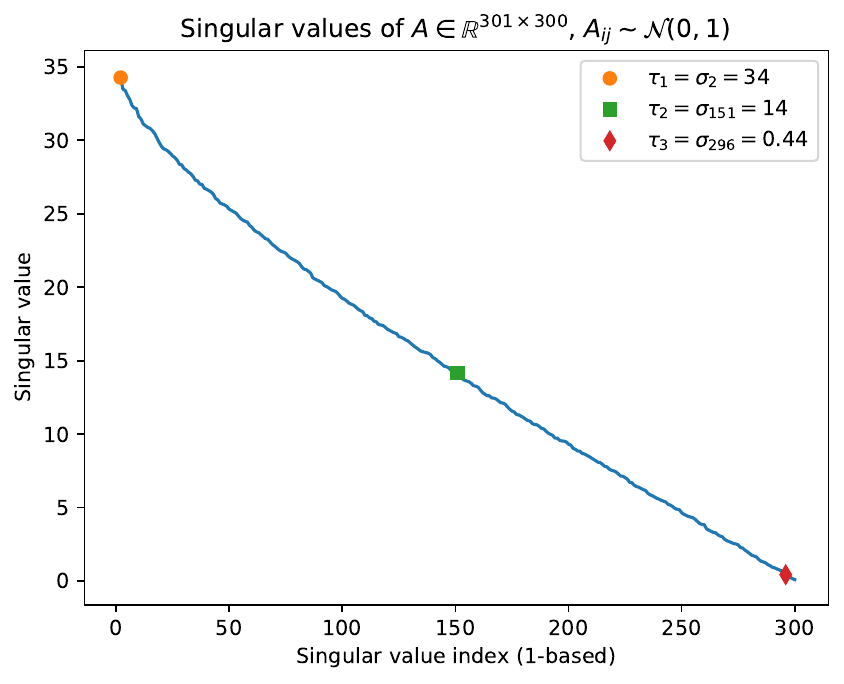}
    \includegraphics[width=0.495\linewidth]{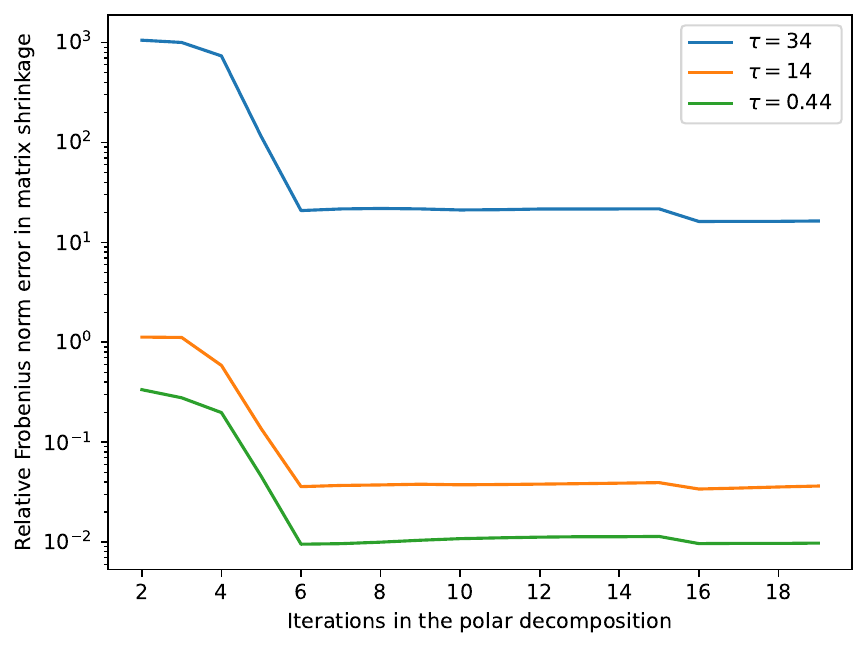}
    \caption{Accuracy test on a $301\times 300$ iid Gaussian test matrix, looking at $\|A_\tau - \widetilde{A}_\tau\|_F / \|A_\tau\|_F$ for a range of $\tau$ values, where $\widetilde{A}_\tau$ is the approximation from Proposition 1. Left: singular values of the matrix, and three test values of $\tau$. Right: relative accuracy as a function of number of iterations used in the polar decomposition subroutine.}
    \label{fig:accuracy}
\end{figure}

Next, Fig.~\ref{fig:speed} compares the wall-clock runtime of the baseline SVD-based algorithm with the polar-based algorithm, for matrices of varying size $m\times n$ for $m\in\{1000,2000,3000,4000,5000\}$ and $n=m+1$, averaged over 5 draws of a random matrix, and using $25$ iterations for the Polar Express subroutine. This was testesd on a range of GPUs available on Google Colab as of July 2026. We see that for all three GPUs, the polar-based algorithm is approximately $10\times$ faster for all matrix sizes. The scaling with $m$ is roughly the same for both algorithms, which is expected.

\begin{figure}
    \centering
    \includegraphics[width=0.5\linewidth]{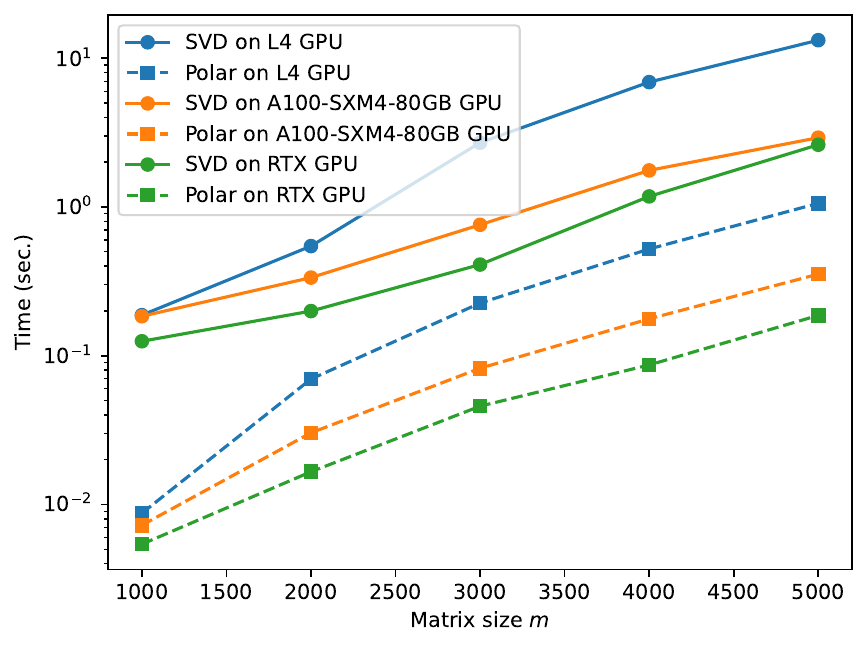}
    \caption{Comparing the speed of the baseline implementation versus the GPU-friendly polar decomposition version, on several GPUs.  The solid lines show the reference SVD-based algorithm, while the dashed lines show the polar decomposition based algorithm from Proposition 1. The different colors correspond to different GPUs. 
    For all three GPUs, the GPU-friendly version is approximately $10\times$ faster, consistently over a range of matrix sizes.}
    \label{fig:speed}
\end{figure}

\section{Code}
Our implementation is in Python using Torch. 
Code for the reference algorithm:
\begin{lstlisting}
def threshold_helper(U,S,Vh,tau):
    """ Given A=U@diag(S)@Vh, this shrinks S by tau, cutoff at zero """
    return (U * torch.maximum(S-tau,torch.tensor(0.))) @ Vh

def singular_value_threshold(A, tau):
    U, S, Vh = torch.linalg.svd(A, full_matrices=False)
    return threshold_helper(U,S,Vh,tau)
\end{lstlisting}

Code for the proposed algorithm, using Polar Express~\cite{amsel2025polar}, downloaded from \url{https://raw.githubusercontent.com/NoahAmsel/PolarExpress/refs/heads/main/polar_express.py}:
\begin{lstlisting}
from polar_express import PolarExpress
def singular_value_threshold_via_Polar(A,tau, polarAlg=None, maxIts = 11):
    # The PolarExpress algorithm works on the last two dimensions of the tensor,
    # doing everything else batch.
    # However, this code is not yet batch compatible
    #   e.g., need to update torch.eye()
    if polarAlg is None:
        polarAlg = lambda A : PolarExpress(A,maxIts).float()
        # Note that PolarExpress returns dtype torch.bfloat16
    if A.ndim != 2: raise ValueError("input tensor must be 2D")
    m, n = A.size(-2), A.size(-1) # better than A.shape since ignores batch dim.
    UV = polarAlg(A)
    if n < m:
        S = polarAlg( A.mT @ A - tau**2 * torch.eye(n,device=A.device) )
        Y = (A+A@S)/2 - tau/2*(UV+UV@S)
    else:
        S = polarAlg( A @ A.mT - tau**2 * torch.eye(m,device=A.device))
        Y = (A+S@A)/2 - tau/2*(UV+S@UV)
    return Y.float() # convert to float32 if not already
\end{lstlisting}

% \subsection*{Acknowledgments}
% \vspace{-.5em}
% The author is grateful to P.~Combettes for helpful discussions and introducing the Chen-Teboulle algorithm.

\pdfbookmark[1]{References}{refSection}
\bibliographystyle{amsalpha}

\bibliography{refs}
\end{document}